\newtheorem{theorem}{Theorem}[section]
\newtheorem{prop}{Proposition}[section]
\newtheorem{remark}{Remark}[section]
\newcommand{\IND}{\mathop{\rm ind}\nolimits}
\newtheorem{cor}{Corollary}[section]
\title{Curvature extrema and four-vertex theorems for polygons and polyhedra}
\author{Oleg R. Musin}
\begin{document}
\date{}
\maketitle

\begin{abstract} Discrete analogs of extrema of curvature and generalizations of the four-vertex theorem to the case of polygons and polyhedra are suggested and developed. For smooth curves and polygonal lines in the plane, a formula relating the number of extrema of curvature to the winding numbers of the curves (polygonal lines) and their evolutes is obtained. Also are considered higher-dimensional analogs of the four-vertex theorem for regular and shellable triangulations.
\end{abstract}

\section{Introduction}

\noindent{\bf 1.1.} The notion of curvature always played a special
role in mathematics and theoretical physics. Despite the fact that
this notion lives in science for more than two hundred years, the
flow of papers devoted to it does not draw, and the area of
applications widens.

In addition to the mathematical aspect, the subject of the present
paper has an entirely practical aspect. The distribution of the
curvature function and its singular points are of interest in many
applied areas. The present author participated in projects related
to cartographic generalization (generalization of the image of an
object on a map under passage to smaller scale), when it is
necessary to distinguish singular points on a curve or a surface:
usually, they are zeros and (local) extrema of curvature \cite{6,
12, 19}. Also, extrema of curvature and their distribution are
important characteristics in computer modeling of surfaces.

\medskip

\noindent{\bf 1.2.}  In differential geometry, the notion of curvature
admits a ``good'' definition only for the class of ($C^2$-)smooth
curves and surfaces. In the applications, one usually deals with
discrete models, where a curve is represented by a polygonal line
and a surface is represented by a polyhedron. Thus, there arises a
problem of {\it correct\/} definition of the notions of curvature
and extrema of curvature in the discrete case.

What does the correctness mean in this case? To our mind, the
discrete analog of curvature must satisfy a certain global property
of curvature fulfilled in the smooth case.

\medskip

\noindent{\bf 1.3.}  One of such global properties of the Gauss
curvature is the Gauss--Bonnet theorem, the discrete analog of which
is well known. Let $M$ be a polygon or a closed two-dimensional
polyhedron. We assume that the curvature of $M$ vanishes everywhere
except the vertices. In the first case, the ({\it angular\/}) {\it
curvature\/} of $M$ at a vertex $A$ is the quantity

$$
\pi - \angle A,
$$
and in the second case it is the quantity

$$
2\pi - (\text{the sum of the planar angles of the faces of $M$ at a
vertex $A$}).
$$
If $M$ is a polygon, then the sum of the curvatures at its vertices
is equal to $2\pi$, and if $M$ is a polyhedron, then the sum is
equal to $2\pi \chi(M)$, where $\chi(M)$ is the Euler characteristic
of $M$. We easily see that for such definition of curvature we
automatically obtain the same assertion as in the Gauss--Bonnet
theorem for the smooth case.

\medskip

\noindent{\bf 1.4.} To our mind, another not less important property of
curvature is the {\it four-vertex theorem\/} for plane curves. Here
it suffices to refer to V.~I.~Arnold, who said many times (and
proved his words by numerous publications, see, e.g., \cite{2,3,4,5}),
that this theorem reflects a fundamental property of dimension two.

We start our paper with a short history of the four-vertex theorem (Section 2).
 ``Discrete'' versions of this theorem for polygons are well
known in geometry: it is famous lemmas by Cauchy and
A.~D.~Aleksandrov.

\medskip

\noindent{\bf 1.5.} For smooth curves, the {\it vertices\/} are
critical points of the curvature function on the curve. In the
discrete case, where the curve is a closed polygonal line, there are
several definitions of the notion of ``vertex,'' or, more precisely,
extremal vertex, see~\cite{14, 21, 22, 25}. In these papers, under
the corresponding restrictions on the polygon, discrete versions of
the four-vertex theorem are obtained. In Section 3, a definition of an
extremal vertex of a polygonal line is suggested such that the
corresponding point of the caustic (evolute) is a cusp (a point of
return). It turns out that there is a simple relationship between
the number of extremal vertices and the winding numbers of the curve
and its caustic. A similar formula relates the number of the tangent
lines of a curve $\Gamma$ drawn from a point $P$ on the plane to the
winding number of $\Gamma$ and the degree of $\Gamma$ with respect
to $P$.

\medskip

\noindent{\bf 1.6.}  
In Section 4, we consider versions of the four-vertex theorem
for regular and shellable triangulations of a ball. 
 
Consider a $d$-dimensional simplicial polytope $P$ in
$d$-dimensional Euclidean space. We call this polytope generic if
it has no $d+2$ cospherical vertices and is not a $d$-dimensional
simplex. From now on, we consider only generic simplicial
polytopes. Each $(d-2)$-dimensional face uniquely defines a
neighboring sphere going through the vertices of two facets
sharing this $(d-2)$-dimensional face. Neighboring sphere is
called empty if it does not contain other vertices of $P$ and it
is called full if all other vertices of $P$ are inside of it. We
call an empty or full neighboring sphere extremal. Schatteman
(\cite{21}, Theorem 2, p.232) claimed the following theorem:

\medskip

\noindent{\bf Theorem [Schatteman].} {\it For each convex $d$-dimensional polytope
$P$ there are at least $d$ $(d-2)$-dimensional faces defining
empty neighboring spheres and at least $d$ $(d-2)$-dimensional
faces defining full neighboring spheres.}

\medskip

For $d=2$ this result is well known (see 2.7).

Although we do not know whether this theorem is false for $d>2$, in \cite{AGMT} we show that Schatteman's proof  has certain gaps. 
In the original version of this paper we claimed that any regular triangulation of a convex $d$-polytope has at least $d$ ``ears''. For a proof we used the same arguments as in  \cite{21}.  Thus, the $d$ - ``ears''  problem of a regular triangulation is still open.


\section {Four-vertex theorem and its generalizations}


\noindent{\bf 2.1.}  We define an {\it oval} as a convex smooth closed
plane curve. The classical four-vertex theorem by Mukhopadhayaya
\cite{13} published in 1909 says the following:

\medskip

{\it The curvature function on an oval has at least
four local extrema (vertices).}

\medskip

It is well known that any continuous function on a compact set has
at least two (local) extrema: a maximum and a minimum. It turns out
that the curvature function has at least four local extrema.

The paper was noticed, and generalizations of the result appeared
almost immediately. In 1912, A.~Kneser showed that convexity is not
a necessary condition and proved the four-vertex theorem for a
simple closed plane curve.

\medskip

\noindent{\bf 2.2.} The famous book \cite{8} by W.~Blaschke (first
published in 1916), together with other generalizations, contains
a ``relative'' version of the four-vertex theorem. Here, we preserve
the formulation and notation from \cite[p. 193]{8}.

\medskip

{\it Let $C_1$ and $C_2$ be two (positively
oriented) convex closed curves, and let $do_1$ and $do_2$ be arc
elements at points with parallel (and codirected) support lines.
Then the ratio $do_1/do_2$ has at least four extrema.}

\medskip

In the case where $C_2$ is a circle, this theorem turns into the
theorem on four vertices of an oval.

\medskip

\noindent{\bf 2.3.} In 1932, Bose~\cite{9} published a remarkable
version of the four-vertex theorem in the spirit of geometry ``in
the large.'' While in the classical four-vertex theorem the extrema
are defined ``locally,'' here they are defined only ``globally.''

Let $G$ be an oval no four points of which lie on a circle. We
denote by $s_-$ and $s_+$ ($\text{resp}$., $t_-$ and  $t_+$) the
number of its circles of curvature ($\text{resp}$., the circles
touching $G$ at exactly three points) lying inside ($-$) and outside
($+$) the oval $G$, respectively. (The {\it curvature circle\/} of
$G$ at a point $p$ touches $G$ at $p$ and has radius $1/k_G(p)$,
where $k_G(p)$ is the curvature of $G$ at $p$.)

In this notation, we have the relation

$$
s_- - t_- = s_+ - t_+ = 2.
$$

If we define {\it vertices\/} as the points of tangency of the oval
$G$ with its circles of curvature lying entirely inside or outside
$G$, then these formulas imply that the oval $G$ has at least four
vertices. It is worth mentioning that this fact was proved by
H.~Kneser for 10 years before Bose. (Actually, H.~Kneser is a son of A.~Kneser, and so two interesting
strengthenings of the four-vertex theorem belong to one family.)

\medskip

\noindent{\bf 2.4.} Publications related to the four-vertex theorem did
not stop from that time, and their number considerably increased in
the recent years (see~\cite{2,3,4,5, 18}, etc.), to a large extent
owing to papers and talks by V.~I.~Arnold. In the above papers,
various versions of the four-vertex theorem for plane curves and
convex curves in ${\Bbb R}^d$, and their singular points (vertices)
are considered: critical points of the curvature function,
flattening points, inflection points, zeros of higher derivatives,
etc. In \cite{23} there is a long list of papers
devoted to this direction.

\medskip

\noindent{\bf 2.5.} It is of interest that the first discrete analog of
the four-vertex theorem arose for almost 100 years before its smooth
version. (I use this opportunity to thank V. A. Zalgaller, who
brought this fact to my attention.) In 1813, Cauchy, in his splendid
paper on rigidity of convex polyhedra, used the following lemma:

\medskip

\noindent{\bf Cauchy lemma.} {\it Let $M_1$ and $M_2$ be convex $n$-gons with
sides and angles $a_i$, $\alpha_i$ and $b_i$, $\beta_i$,
respectively. Assume that $a_i = b_i$ for all $i=1,\dots,n$. Then
either $\alpha_i = \beta_i$, or the quantities $\alpha_i - \beta_i$
change sign for $i=1,\dots,n$  at least four times.}

\medskip

In Aleksandrov's book \cite{1}, the proof of the uniqueness of a
convex polyhedron with given normals and areas of faces involves a
lemma, where the angles in the Cauchy lemma are replaced by the
sides. We present a version of it, which is somewhat less general
than the original one.

\medskip

\noindent{\bf A.~D.~Aleksandrov's lemma.} {\it Let $M_1$ and $M_2$ be two
convex polygons on the plane that have respectively parallel sides.
Assume that no parallel translation puts one of them inside the
other. Then when we pass along $M_1$ (as well as along $M_2$), the
difference of the lengths of the corresponding parallel sides
changes the sign at least four times.}

We easily see the resemblance between the above relative four-vertex
theorem for ovals (apparently belonging to Blaschke) with the Cauchy
and Aleksandrov lemmas. Furthermore, approximating ovals by
polygons, we easily prove the Blaschke theorem with the help of any
of these lemmas.

The Cauchy and Aleksandrov lemmas easily imply four-vertex theorems
for a polygon.

\medskip

\noindent{\bf Corollary of the Cauchy lemma.} {\it Let $M$ be an equilateral
convex polygon. Then at least two of the angles of $M$ do not exceed
the neighboring angles, and at least two of the angles of $M$ are
not less than the neighboring angles.}

\medskip

\noindent{\bf Corollary of the Aleksandrov lemma.} {\it Let all angles of a
polygon $M$ be pairwise equal. Then at least two of the sides of $M$
do not exceed their neighboring sides, and at least two of the sides
of $M$ are not less than their neighboring sides.}

\medskip

\noindent{\bf 2.6.} In the applications, the {\it curvature radius\/}
at a vertex of a polygon is usually calculated as follows. Consider
a polygon $M$ with vertices $A_1, \dots, A_n$. Each vertex $A_i$ has
two neighbors: $A_{i-1}$ and $A_{i+1}$. We define the curvature
radius of $M$ at $A_i$ as follows:

$$
R_i(M) =R(A_{i-1}A_{i}A_{i+1}).
$$

\medskip

\noindent{\bf Theorem \cite{14}.} {\it Assume that $M$ is a convex polygon and
for each vertex $A_i$ of $M$ the point $O(A_{i-1}A_{i}A_{i+1})$ lies
inside the angle $\angle{A_{i-1}A_{i}A_{i+1}}$. Then the theorem on
four local extrema holds true for the (cyclic) sequence of the
numbers $R_1(M)$, $R_2(M)$, \dots, $R_n(M)$, i.e., at least two of
the numbers do not exceed the neighboring ones, and at least two of
the numbers are not less than the neighboring ones.}

\medskip

Furthermore, this theorem generalizes the four-vertex theorems
following from the Cauchy and Aleksandrov lemmas.

\medskip

\noindent{\bf 2.7. Discrete version of H.~Kneser's four-vertex
theorem.} A circle $C$ passing through certain vertices of a polygon
$M$ is said to be {\it empty\/} (respectively, {\it full\/}) if all
the remaining vertices of $M$ lie outside (respectively, inside)
$C$. The circle $C$ is {\it extremal\/} if $C$ is empty or full.

\medskip

\noindent{\bf Theorem [folklore].} {\it Let $M=A_1\dots A_n$ be a convex $n$-gon, $n>3$,
no four vertices of which lie on one circle. Then at least two of
the $n$ circles $C_i(M):=C(A_{i-1}A_{i}A_{i+1})$, $i=1,\dots,n$, are
empty and at least two of them are full, i.e., there are at least
four extremal circles.}

\medskip

(S.~E.~Rukshin told the author that this result for many years is
included in the list of problems for training for mathematical
competitions and is well known to St.~Petersburg school students
attending mathematical circles.)

\medskip

\noindent{\bf 2.8.} It is also easy to suggest a direct generalization
of the Bose theorem for the polygons from the statement of Theorem
2.7.

\medskip

\noindent{\bf Theorem.} {\it We denote by $s_-$ and $s_+$ the numbers of empty
and full circles among the circles $C_i(M)$, and we denote by $t_-$
and $t_+$ the numbers of empty and full circles passing through
three pairwise nonneighboring  vertices of $M$, respectively. Then,
as before, we have}

$$
s_- - t_- = s_+ - t_+ = 2.
$$

The author suggested this fact as a problem for the All-Russia
mathematics competition of high-school students in 1998. (The
knowledge of Theorem~2.7 on four extremal circles, which is a
corollary of this fact, did not help much in solving this problem.)

\medskip

\noindent{\bf 2.9.} One more generalization of the Bose theorem is
given in \cite{25}, where one considers the case of an {\it
equilateral\/} polygon, which is not necessarily convex.

\medskip

\noindent{\bf 2.10.} V.~D.~Sedykh \cite{22} proved a theorem on four
support planes for strictly convex polygonal lines in ${\Bbb R}^3$,
and the main corollary of this theorem is also a version of
Theorem~2.7.

\medskip

\noindent{\bf Theorem [Sedykh]} {\it If any two neighboring vertices of a
polygon $M$ lie on an empty circle, then at least four of the
circles $C_i(M)$ are extremal.}

\medskip

It is clear that convex and equilateral polygons satisfy this
condition. Furthermore, Sedykh constructed examples of polygons
showing that his theorem is wrong without this assumption.






\section{Extremal vertices, cusps, and caustics}


\noindent{\bf 3.1.} As already noted, the four-vertex theorem holds
true for any smooth simple (i.e., without self-intersections) curve.
Generally speaking, we cannot refuse from the assumption of
simplicity of the curve. For example, the lemniscate (figure eight)
has only two vertices. We observe that the winding number of the
lemniscate vanishes. Another example: an ellipse has exactly four
vertices, and the winding number of the ellipse is equal to 1. We
see that there is a certain relationship between the winding number
of the curve and the number of vertices.

One of the approaches to the proof of the four-vertex theorem for
ovals involves the relationship between vertices, cusps, and the
winding number of the caustic of a curve. In particular, this
relationship is used in the proof in \cite{14}, as well as in a
number of assertions in papers by Arnold (see~\cite{2, 3} etc.).

We present a simple formula relating the number of vertices of a
plane curve with the winding numbers of the curve and its caustic,
which also holds true for an arbitrary closed polygonal line. For a
polygonal line, we define the notion of {\it vertex\/} (as points of
local extremum of curvature) without defining the curvature. At the
same time, this definition completely corresponds to the notion of
vertex in the smooth case.

\medskip

\noindent{\bf 3.2.} Let $\Gamma$ be a closed oriented polygonal line in
the plane with vertices $V_1$, \dots, $V_n$ ordered cyclically. A
vertex $V_i$ is said to be {\it positive\/} if the left angle at $V$
(when we pass along the polygonal line $\Gamma$ in accordance with
the orientation) is at most $\pi$, Otherwise, $V_i$ is {\it
negative\/}.

Let $C_i=C(V_{i-1}V_iV_{i+1})$. (For simplicity, we assume that no
three sequential vertices of $\Gamma$ lie on one line.)

Assume that a vertex $V_i$ is positive. We say that {\it the
curvature at the vertex $V_i$ is greater\/} (respectively, {\it
less\/}), {\it than the curvature at the vertex\/} $V_{i+1}$ and
write $V_i \succ V_{i+1}$ (respectively, $V_i \prec V_{i+1}$) if the
vertex $V_{i+1}$ is positive and $V_{i+2}$ lies outside
(respectively, inside) the circle $C_i$, or the vertex $V_{i+1}$ is
negative, and $V_{i+2}$ lies inside (respectively, outside) the
circle $C_i$.

In order to compare the curvatures at the vertices $V_i$ and
$V_{i+1}$ in the case where the vertex $V_i$ is negative, we replace
in the above definition the word ``greater'' by the word ``less,''
and the word ``outside'' by the word ``inside.'' We easily see that
this definition is correct, i.e., that if $V_i \succ V_{i+1}$, then
$V_{i+1} \prec V_i$.

\medskip

\noindent{\bf 3.3. Extremal vertices.} A vertex $V_i$ of a polygonal
line is {\it extremal\/} if the curvature at $V_i$ is greater
(respectively, less) than the curvatures at two neighboring
vertices, i.e.,

$$
V_{i+1} \prec V_i \succ  V_{i-1}\quad \text{or}\quad V_{i+1} \succ
V_i\prec  V_{i-1}.
$$

\begin{remark} In \cite{22}, a vertex $V_i$ is said to be {\it
support\/} if all vertices of the polygonal line $\Gamma$ lie to one
side from the circle $C_i$. According to this definition, it would
be natural to say that a vertex $V_i$ is an {\it extremal\/} (or a
vertex {\it of local support\/}) if both vertices $V_{i-2}$ and
$V_{i+2}$ simultaneously lie inside or outside the circle $C_i$.
However, such definition does not always agree with that presented
above. It is easy to give an example of a nonconvex quadrangle
having a vertex of local support which is not extremal.
\end{remark}

\medskip

\noindent{\bf 3.4. Caustics.} Following Arnold, we define the {\it
caustic\/} of a smooth plane curve $C$ as the set of centers of
curvature of the points of $C$. (In the classical differential
geometry, the caustic is called {\it evolute\/}.)

By analogy with the smooth case, the caustic can be defined for an
arbitrary closed polygonal line $\Gamma=V_1\dots V_n$. We set

$$
O_i=O(V_{i-1}V_iV_{i+1}).
$$
Then the polygonal line $K(\Gamma)=O_1\dots O_n$ will be called the
{\it caustic\/} of the polygonal line $\Gamma$.

In the smooth case, the center of curvature of a vertex of a curve
is a cusp of the caustic. Consider a point $V$ of a smooth curve
$\Gamma$ with nonzero curvature. Let $V',V''\in \Gamma$ be two
points close to $V$ lying on equal distance $h$ to different sides
of $V$. We observe that the point $O(V'VV'')$ tends to the center of
curvature (i.e., to a point of the caustic) as $h\to0$. We easily
see that

$$
|\angle V'VV'' - \angle O(V')O(V)O(V'')| = \begin{cases}
\pi &\text{if $V$ is a vertex},\\
0   &\text{otherwise}.
\end{cases}
$$
Thus, the absolute value of the difference between the angles at a
vertex and at a cusp is equal to $\pi$.

Accordingly, we define {\it cusps\/} of the caustic $K(\Gamma)$ for
a polygonal line.

\medskip

\noindent{\bf 3.5. Cusps.} A vertex $O_i$ of the caustic $K(\Gamma)$ is
a {\it cusp\/} if

$$
\angle V_{i-1}V_iV_{i+1} - \angle O_{i-1}O_iO_{i+1}=\pm\pi.
$$

\begin{theorem} A vertex $V_i$ of the polygonal line $\Gamma$
is extremal if and only if the vertex $O_i$ is a cusp of the caustic
$K(\Gamma)$.
\end{theorem}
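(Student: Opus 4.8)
The plan is to reduce the cusp condition, which compares the macroscopic angles $\angle V_{i-1}V_iV_{i+1}$ and $\angle O_{i-1}O_iO_{i+1}$, to a statement about the \emph{directions} of the two caustic edges meeting at $O_i$, and then to match those directions with the inside/outside data that defines the relations $\succ$ and $\prec$.

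First I would exploit the defining property of the circumcenters. Since $O_{i-1}$ and $O_i$ are both equidistant from $V_{i-1}$ and $V_i$ (the circles $C_{i-1}$ and $C_i$ both pass through these two vertices), the caustic edge $O_{i-1}O_i$ lies on the perpendicular bisector of the polygon edge $V_{i-1}V_i$; likewise $O_iO_{i+1}$ lies on the perpendicular bisector of $V_iV_{i+1}$. Thus the two caustic edges at $O_i$ are obtained from the two polygon edges at $V_i$ by a common rotation through $\pi/2$, so $\angle O_{i-1}O_iO_{i+1}$ equals either $\angle V_{i-1}V_iV_{i+1}$ or its supplement $\pi-\angle V_{i-1}V_iV_{i+1}$. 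Working with oriented (``left'') angles, the difference $\angle V_{i-1}V_iV_{i+1}-\angle O_{i-1}O_iO_{i+1}$ is then congruent to $0$ or to $\pi$, and it equals $\pm\pi$ precisely when the rays $O_iO_{i-1}$ and $O_iO_{i+1}$ point along the two bisectors in ``matching'' senses. Encoding each ray direction by a sign $\varepsilon_1,\varepsilon_2\in\{\pm1\}$ (the sense of $O_iO_{i\mp1}$ relative to the rotated edge direction), the cusp condition becomes a single sign equation relating $\varepsilon_1,\varepsilon_2$ to the orientation types of the vertices.

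Next I would pin down these signs geometrically. Placing $V_{i-1},V_i$ symmetrically about the origin with their perpendicular bisector as the $y$-axis, every circle through $V_{i-1},V_i$ has its center on this axis, and a direct computation shows that the side of $O_i$ on which $O_{i-1}$ falls is governed by whether $V_{i-2}$ lies inside or outside $C_i$, with a flip dictated by which side of the chord $V_{i-1}V_i$ the vertex $V_{i-2}$ lies on — that is, by whether $V_{i-1}$ is positive or negative. The same computation handles $\varepsilon_2$ via $V_{i+2}$ and $C_i$. To connect ``$V_{i-2}$ inside/outside $C_i$'' with the datum actually used in the definition of the comparison (namely ``$V_{i+1}$ inside/outside $C_{i-1}$''), I would invoke the in-circle (Delaunay) symmetry for the two triangles sharing the edge $V_{i-1}V_i$: the two predicates agree or disagree according to whether $V_{i-2}$ and $V_{i+1}$ lie on the same or on opposite sides of the line $V_{i-1}V_i$, which is again a condition on the vertex types. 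Feeding this in identifies each of $\varepsilon_1,\varepsilon_2$ with the outcome of the corresponding comparison ($V_i\succ V_{i-1}$ versus $V_i\prec V_{i-1}$, and similarly on the forward side).

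Finally I would translate ``$V_i$ is extremal'' — that $V_i$ is a local maximum $V_{i-1}\prec V_i\succ V_{i+1}$ or a local minimum $V_{i-1}\succ V_i\prec V_{i+1}$ — into the resulting pattern of $\varepsilon_1,\varepsilon_2$ and the vertex signs, and check that it is exactly the pattern for which the cusp equation from the first step holds. The main obstacle is precisely this sign bookkeeping: one must carry the positive/negative (convex/reflex) type of three consecutive vertices consistently through the oriented-angle computation \emph{and} through the in-circle symmetry, so that the macroscopic identity $\angle V_{i-1}V_iV_{i+1}-\angle O_{i-1}O_iO_{i+1}=\pm\pi$ lines up term by term with the combinatorial definition of $\succ$ and $\prec$. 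The geometric backbone — perpendicular bisectors and Delaunay symmetry — is routine; the delicate point is verifying that the definition of the comparison was arranged exactly so that all four vertex-type cases collapse to the single sign condition characterizing a cusp, and confirming this is where I would expect the bulk of the work (and the risk of sign errors) to lie.
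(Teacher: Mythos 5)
Your proposal is correct and follows essentially the same route as the paper: the paper's proof is a single sentence ("consider all cases of positiveness and negativeness of $V_{i-1}$, $V_{i+1}$ and the position of $V_{i-2}$, $V_{i+2}$ with respect to $C_i$"), and your argument is exactly that case analysis carried out in full, organized via the perpendicular-bisector fact and the in-circle symmetry that any honest execution of the paper's one-liner would require. No gap; your version is simply the explicit form of the proof the paper leaves to the reader.
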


This theorem shows that the definition of extremality of a vertex of
a polygonal line agrees with the corresponding definition in the
smooth case.

\begin{proof} We simply consider all cases of positiveness and
negativeness of the vertices $V_{i-1}$ and $V_{i+1}$ and position of
the vertices $V_{i-2}$ and $V_{i+2}$ with respect to the circle
$C_i$.
\end{proof}

\medskip

\noindent{\bf 3.6. Winding number of a curve.} Let $\Gamma$ be a closed
oriented smooth plane curve or a polygonal line. (It is possible
that $\Gamma$ has self-intersections.) For simplicity, we assume
that $\Gamma$ is generic, i.e., the smooth curve has no rectilinear
parts, and the polygonal line has no three sequential vertices lie
on one line.

As before, an extremal vertex $V$ of the curve $\Gamma$ is said to
be {\it positive\/}, if the center of curvature of $V$ lies to the
left with respect to the positive direction, and {\it negative\/}
otherwise. We denote by $N_+$ (respectively, $N_-$) the number of
positive (respectively, of negative) extremal vertices of the curve
$\Gamma$.

For $\Gamma$, the notion of the winding number $\IND(\Gamma)$ is
defined. For a polygonal line, we can define the winding number as

$$
\frac1{2\pi}\sum_i( \pi-\angle V_{i-1}V_iV_{i+1}),
$$
and for an oriented smooth curve -- as the winding number of the
tangent vector of the curve, or as the quantity

$$
\frac1{2\pi}\int k(s)ds,
$$
where $k(s)$ is the curvature at the point $s$.

\medskip

\noindent{\bf 3.7. The winding number of the caustic of a curve.} In
order to formulate the main result of this section, we must define
the notion of winding number also for the caustic. In this case, the
above definition of the winding number formally does not do because
for the points of the curve $\Gamma$ at which the curvature vanishes
the center of curvature lies at ``the point at the infinity,'' and
the tangent vector is not defined at cusps. However, it is easy to
see that in this case we can also correctly define the winding
number. We only observe that when we pass along the curve the
rotation of the tangent vector of the caustic is continuous
everywhere (also including the points of the curve where the
curvature vanishes), except the cusps, where a jump by $\pi$ occurs.
The winding number of the caustic can be rigorously defined as the
limit of the winding number of the caustic of the polygonal line
uniformly approximating the curve.

\begin{theorem} $N_+ - N_- = 2\IND(\Gamma) - 2\IND(K(\Gamma))$.
\end{theorem}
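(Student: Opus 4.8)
The plan is to compute both winding numbers directly from their definition as sums of turning angles and to compare $\Gamma$ with $K(\Gamma)$ edge by edge. Write $e_i = V_i - V_{i-1}$ for the $i$-th edge of $\Gamma$ and $f_i = O_i - O_{i-1}$ for the $i$-th edge of the caustic. The first and decisive observation is that both circumcenters $O_{i-1}=O(V_{i-2}V_{i-1}V_i)$ and $O_i=O(V_{i-1}V_iV_{i+1})$ lie on the perpendicular bisector of the common edge $V_{i-1}V_i$; hence $f_i \perp e_i$. This is the exact discrete counterpart of the classical fact that the tangent to the evolute is directed along the normal to the curve.

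Consequently, if $\alpha_i$ denotes the direction angle of $e_i$, the direction angle of $f_i$ equals $\alpha_i + \sigma_i\frac{\pi}{2}$ for a sign $\sigma_i=\pm 1$ recording on which side of $e_i$ the caustic edge runs (genericity and the no-three-collinear hypothesis guarantee that $O_i$, and hence $\sigma_i$, is well defined). Writing the turning angle of $\Gamma$ at $V_i$ as $\theta_i=\alpha_{i+1}-\alpha_i$ and that of $K(\Gamma)$ at $O_i$ as $\psi_i=(\alpha_{i+1}+\sigma_{i+1}\frac{\pi}{2})-(\alpha_i+\sigma_i\frac{\pi}{2})$, both reduced to $(-\pi,\pi)$, I get $\psi_i=\theta_i$ whenever $\sigma_{i+1}=\sigma_i$, and $\psi_i=\theta_i\mp\pi$ whenever $\sigma_{i+1}\neq\sigma_i$. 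Thus the per-vertex discrepancy $\theta_i-\psi_i$ vanishes except at the indices where $\sigma$ changes sign, where it equals $\pm\pi$.

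Next I would identify these sign changes with the cusps of the caustic: $\sigma$ jumps at $i$ precisely when the caustic reverses direction at $O_i$, i.e. when $O_i$ is a cusp, which by Theorem 3.5 happens exactly when $V_i$ is extremal. Summing $\theta_i-\psi_i$ over all $i$ then gives $\sum_i(\theta_i-\psi_i)=2\pi\,\IND(\Gamma)-2\pi\,\IND(K(\Gamma))$ on one side, and $\pi$ times a signed count of extremal vertices on the other.

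The main obstacle is the bookkeeping of signs: I must check that the sign of the $\pm\pi$ contribution at an extremal vertex agrees with the positive/negative classification of 3.6 (center of curvature $O_i$ to the left or to the right of the direction of travel), so that the signed count is exactly $N_+-N_-$. This is carried out by running through the same case analysis as in the proof of Theorem 3.5 — the cases of positivity or negativity of $V_{i\pm1}$ together with the position of $V_{i\pm2}$ relative to the circle $C_i$ — and verifying that a positive extremal vertex contributes $+\pi$ and a negative one $-\pi$. Dividing the resulting identity by $2\pi$ yields $\IND(\Gamma)-\IND(K(\Gamma))=\tfrac12(N_+-N_-)$, which is the claim. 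The smooth case then follows from the limiting definition of $\IND(K(\Gamma))$ via uniform polygonal approximation.
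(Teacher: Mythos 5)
Your proposal is correct and follows essentially the same route as the paper: both compare the winding numbers of $\Gamma$ and $K(\Gamma)$ term by term, noting that the turning angles agree at non-extremal vertices and differ by $\pm\pi$ (with sign determined by positivity/negativity) exactly at the extremal vertices, i.e.\ at the cusps, and both then obtain the smooth case by polygonal approximation. Your perpendicular-bisector observation ($f_i \perp e_i$) is a nice way of making the $0$-or-$\pm\pi$ dichotomy automatic, but the underlying decomposition, the reliance on the extremal-iff-cusp case analysis for the sign bookkeeping, and the limiting argument coincide with the paper's proof.
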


\begin{proof} First, we consider the case of a polygonal line. If a
vertex $V_i\in \Gamma$ is not extremal, then

$$
\angle O_{i-1}O_iO_{i+1}=\angle V_{i-1}V_iV_{i+1}.
$$
For an extremal vertex, these angles differ by $\pi$ (by $-\pi$) if
the vertex is positive (negative). Substituting these angles in the
expression for the winding number of a polygonal line, we obtain the
required formula.

Approximating a smooth curve by a polygonal line, in the limit we
obtain our formula and for a smooth curve. 
\end{proof}

How does Theorem 3.2 relate to the four-vertex theorem? We explain
this for the classical case of an oval. We observe that for an oval
we have $N_-=0$, and $\IND(\Gamma) =1$. Therefore, the four-vertex
theorem for an oval is obtained from Theorem 3.2 with the help of the
additional fact that $\IND(K(\Gamma))<0$ (for the proof see, e.g.,
\cite{14}).

V.~I.~Arnold turned my attention to the fact that the above formula
resembles the formula for the Maslov index of spherical curves in
his paper \cite{3}. It turns out that the list of formulas of such
type can be continued.

\medskip

\noindent{\bf 3.8. Problem on the number of tangent lines.} Consider
the problem on the number of the tangent lines of a curve passing
through a given point. Let $\Gamma$ be a smooth closed plane curve
or a polygonal line, and let $p$ be a point on the plane. We
consider the tangent lines drawn from $p$ to $\Gamma$. If $\Gamma$
is a polygonal line, then a {\it tangent line\/} is a line $l$,
passing through the vertex $V$ of $\Gamma$ so that the vertices of
$\Gamma$ neighboring with $V$ lie on one side of $l$. As before, we
give the sign of points of tangency in accordance with the sign of
the winding number of the curve, and we denote by $N_+$ (by $N_-$)
the number of positive (negative) points of tangency. Here,
$\IND_p(\Gamma)$ denotes the degree of the curve $\Gamma$ with
respect to $p$.

\begin{theorem} $N_+ - N_- = 2\IND(\Gamma) - 2\IND_p(\Gamma)$.
\end{theorem}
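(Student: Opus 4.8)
The plan is to prove Theorem~3.3 in complete parallel with Theorems~3.1 and~3.2, replacing the centers of curvature (the caustic) by the radial projection from the point $p$. Write $\theta(s)$ for the angle the oriented tangent of $\Gamma$ makes with a fixed direction, so that $\oint d\theta = 2\pi\,\IND(\Gamma)$, and write $\phi(s) = \arg(V(s)-p)$ for the angle of the radius vector from $p$, so that $\oint d\phi = 2\pi\,\IND_p(\Gamma)$ by the very definition of the degree of $\Gamma$ about $p$. The key geometric observation is that a point of tangency of $\Gamma$ seen from $p$ is exactly a point where the tangent direction is parallel to the radius vector $V-p$, that is, where $\theta - \phi \equiv 0 \pmod{\pi}$; this is the analog of the statement ``an extremal vertex corresponds to a cusp'' used in Theorem~3.1.

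For the counting I would pass to the doubled angle, since the two tangency directions $\theta-\phi\equiv 0$ and $\theta-\phi\equiv\pi$ must be treated on the same footing. Consider the map $g(s) = \exp\bigl(2i(\theta(s)-\phi(s))\bigr)$ from the parameter circle to $S^1$. Points of tangency are precisely the preimages $g(s)=1$, and (for generic data, so that $1$ is a regular value) the signed number of such preimages is the degree of $g$, namely
$$
\frac{1}{2\pi}\oint d\bigl(2(\theta-\phi)\bigr) = 2\,\IND(\Gamma) - 2\,\IND_p(\Gamma).
$$
Thus the theorem reduces to identifying this signed preimage count with $N_+ - N_-$, i.e. to checking that the sign attached to a point of tangency (in accordance with the winding number, exactly as in~3.6) coincides with the direction in which $2(\theta-\phi)$ crosses a multiple of $2\pi$ there.

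For the polygonal case I would make this explicit, mirroring the proof of Theorem~3.2. Here $\theta$ is constant along each edge and jumps at $V_i$ by the exterior angle $\pi - \angle V_{i-1}V_iV_{i+1}$, while $\phi$ varies continuously along edges and is continuous at the vertices. For a generic $p$ (not lying on any edge line) the radius vector is never parallel to an edge, so $\theta-\phi$ never meets $\pi\mathbb{Z}$ in the interior of an edge; every crossing occurs at a vertex, during the jump of $\theta$. A short computation with $V_i$ at the origin and the line $pV_i$ taken as the reference axis shows that the jump crosses $\pi\mathbb{Z}$ if and only if $\sin(\theta^- - \phi)$ and $\sin(\theta^+ - \phi)$ have opposite signs, which is exactly the condition that $V_{i-1}$ and $V_{i+1}$ lie on the same side of the line $pV_i$ --- the definition of a point of tangency. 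The sign of the crossing is the sign of the exterior angle at $V_i$, recording whether the tangency occurs at a positive or a negative vertex; summing over the vertices gives $N_+ - N_- = 2\,\IND(\Gamma) - 2\,\IND_p(\Gamma)$. The smooth case then follows by uniform polygonal approximation, just as at the end of the proof of Theorem~3.2.

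I expect the only real obstacle to be the sign bookkeeping: one must verify, case by case on the position of $p$ relative to the two edges at $V_i$ (and on which multiple of $\pi$ is crossed), that the crossing direction of $2(\theta-\phi)$ agrees with the $\pm$ convention of Section~3.6, so that the signed degree is genuinely $N_+-N_-$ rather than its negative. Two minor technical points must also be dispatched: the genericity assumptions (no three consecutive vertices collinear, and $p$ off every edge line and off $\Gamma$ itself) that make tangencies isolated and confined to vertices, and the verification that $\oint d\phi$ computes $2\pi\,\IND_p(\Gamma)$ in the limit for the approximating polygons.
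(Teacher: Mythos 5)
Your proposal is correct and follows exactly the route the paper indicates for Theorem~3.3 (which the paper only sketches as ``similar to Theorem~3.2''): establish the polygonal case by angle bookkeeping --- the jump of the tangent angle at a vertex crosses a multiple of $\pi$ relative to the radial angle precisely at a point of tangency, with sign equal to the sign of the exterior angle --- and then pass to the smooth case by polygonal approximation. Your degree-of-$\exp\bigl(2i(\theta-\phi)\bigr)$ packaging is just a tidy reformulation of the paper's winding-number substitution, and you supply the genericity and sign details that the paper leaves implicit.
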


\begin{proof} It is is similar to the proof of Theorem~3.2. First, we
prove the assertion for a polygonal line, and then, by way of
passage to the limit, for a plane smooth curve. 
\end{proof}

To our mind, the obvious similarity between Theorems 3.2 and 3.3 and the
Arnold formula shows that they are special cases of a certain
general fact on the number of tangent lines.

\medskip

\noindent{\bf 3.9. Case of polyhedra.} In conclusion of this section,
we note that all definitions presented above for polygonal lines are
easily transferred to the case of polyhedra.

Let $M$ be a simplicial $d$-dimensional polyhedron immersed into
${\Bbb R}^{d+1}$. The $(d-1)$-faces of $M$ play the role of the
vertices $V_i$, and the sphere containing all the $d$ vertices of
and the two vertices neighboring to the corresponding $(d-1)$-face
plays the role of the circle $C_i$. As in the case where $d=1$, the
sign of the angle of the face ($>$ or $<$ $\pi$) and position of the
neighboring vertices (inside or outside) with respect to these
spheres determine the order $\prec$ and $\succ$ for neighboring
$(d-1)$-faces of $M$, and, therefore, allow us to define (local)
extremality. In addition to extremals, for $d>1$ there also may be
other ``singularities,'' e.g., ``saddles,'' for $d=2$. It would be
interesting to find a generalization of Theorem 3.2 for the
higher-dimensional case.


\section{Regular and shellable triangulations.}



Let $D$ be a simplicial $d+1$-polytope, i.e. $D$ is homeomorphic to a $(d+1)$-ball and all vertices of $D$ lie on the boundary. Let $t$ be a triangulation of $D$. Let us call an ``ear''  a boundary simplex of $t$. A precise definition of this picturesque term (which is adopted in computational geometry) is as follows: a $(d+1)$-simplex $s$ of 
$t$  is an ``{\it ear\/}'' of $t$ if at least two of its $d$-faces lie on the boundary of $D$. How small can be the number  of  ``ears'' of $t$? 

\medskip

First we consider the simple case where $d=1$.

\begin{prop} Let $t$ be a triangulation of an $n$-gon $M$,
where $n>3$ and all vertices of the triangulation are vertices of
$M$. Then the number of the ``ears'' of the triangulation $t$
exactly by $2$ exceeds the number of the inner triangles.
\end{prop}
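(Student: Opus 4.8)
The plan is to reduce the claim to a pair of linear counting identities obtained by double counting, so that subtracting them yields exactly the constant $2$. Since all vertices of $t$ are vertices of $M$, no interior vertices are added, and so each triangle of $t$ falls into exactly one of three types according to how many of its three edges lie on the boundary $\partial M$. Write $f_0$, $f_1$, $f_2$ for the number of triangles having respectively $0$, $1$, or $2$ boundary edges; here $f_0$ counts the ``inner'' triangles and $f_2$ counts the ``ears.'' Because $n>3$, no triangle can have all three edges on $\partial M$, so these three numbers account for all triangles and there is no $f_3$ term to worry about.

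First I would pin down the total number of triangles. Applying Euler's formula $V-E+F=2$ to the triangulated $n$-gon with $V=n$ (all vertices on the boundary), $F=T+1$ where $T$ is the number of triangles, together with the edge count $3T=2(\text{interior edges})+n$, gives $T=n-2$ in the usual way. Thus
$$
f_0+f_1+f_2 = n-2.
$$
Next I would count incidences between triangles and the $n$ boundary edges of $M$. Every boundary edge lies on exactly one triangle of $t$, and a triangle of type $f_k$ contributes exactly $k$ such edges, so
$$
f_1 + 2f_2 = n.
$$

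Subtracting the first identity from the second eliminates $f_1$ and leaves
$$
f_2 - f_0 = n-(n-2) = 2,
$$
which is precisely the assertion that the number of ears exceeds the number of inner triangles by $2$. There is no genuine obstacle in this argument; the only point requiring care is the triangle count $T=n-2$, and even this can be avoided by establishing it inductively (cutting off a single ear, which always exists, reduces an $n$-gon triangulation to an $(n-1)$-gon triangulation). I would present the Euler-characteristic derivation as the cleanest route, since it makes transparent why the absence of interior vertices is exactly what forces $T=n-2$ and hence the constant difference $2$.
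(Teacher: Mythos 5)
Your proof is correct and is essentially identical to the paper's: both count triangles by the number of their boundary edges ($f_0,f_1,f_2$ versus the paper's $z,y,x$), use the two identities $f_1+2f_2=n$ and $f_0+f_1+f_2=n-2$, and subtract. The only difference is cosmetic — you justify the triangle count $n-2$ via Euler's formula, whereas the paper takes it as known.
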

\begin{proof} We denote by $x$ the number of ``ears'' in $t$, we
denote by $y$ the number of the triangles exactly one side of which
is a side of $M$, and we denote by $z$ the number of the inner
triangles. Then $M$ has $2x+y$ sides, i.e.,

$$
2x+y=n. \eqno (*)
$$
On the other hand, the total number of triangles is equal to $n-2$,
i.e.,

$$
x+y+z=n-2. \eqno (**)
$$
Subtracting {$(**)$} from {$(*)$}, we obtain $x-z=2$, as
required. 
\end{proof}

\begin{cor} Each triangulation of a  polygon $M$ has at
least two ``ears''. 
\end{cor}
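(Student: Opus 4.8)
The plan is to derive the corollary directly from Proposition~4.1, which does essentially all the work. That proposition, established for any triangulation of an $n$-gon with $n>3$ whose vertices coincide with the vertices of $M$, asserts the exact counting identity $x-z=2$, where $x$ denotes the number of ``ears'' and $z$ denotes the number of inner triangles (those no side of which lies on the boundary of $M$). The key structural observation is that $x$ and $z$ count triangles in disjoint classes of the triangulation, so both are nonnegative integers; in particular $z\ge 0$.

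First I would invoke the identity of Proposition~4.1 to write $x=z+2$. Then, applying the inequality $z\ge 0$, I immediately conclude $x\ge 2$, which is precisely the assertion that every such triangulation has at least two ``ears''. No further argument is needed: the exact relation furnished by the proposition, combined with the trivial nonnegativity of a count of triangles, yields the bound at once.

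The one point deserving a word of care is the range of $n$. Proposition~4.1 is stated only for $n>3$, so the corollary follows in that range without modification. For the degenerate case $n=3$ the polygon is itself a triangle, and its unique triangulation consists of a single triangle; here one should either exclude this case explicitly or note how the term ``ear'' is to be interpreted, since the counting identity was not established in that range. I do not anticipate any genuine obstacle in the proof: once the exact relation $x-z=2$ is in hand, the nonnegativity of the inner-triangle count is the only ingredient, and the conclusion is immediate.
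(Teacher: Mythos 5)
Your proof is correct and is exactly how the paper obtains this corollary: it is stated as an immediate consequence of Proposition~4.1, since $x-z=2$ together with $z\ge 0$ gives $x\ge 2$. Your remark about the degenerate case $n=3$ (a single triangle) is a reasonable extra caution, but does not change the argument.
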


This assertion can be regarded as a topological version of the
four-vertex theorem, and the formula $x-z=2$ can be regarded as a
version of the Bose theorem.

In the general case, a triangulation of the $d$-ball, where $d>2$,
may have no ``ears.'' In 1958, M.~E.~Rudin~\cite{20} constructed the
first example of such a triangulation for a convex polyhedron in
${\Bbb R}^{3}$.

Actually, she constructed a {\it nonshellable\/} triangulation of
the ball $D^3$. (A triangulation $t$ of the ball $D^d$ all vertices
of which lie on the boundary of the ball is {\it shellable\/} if the
$d$-simplices $s_j$ in the triangulation $t$ can be ordered so that
the union $P_k = s_1\cup s_2\cup\ldots\cup s_k$ is homeomorphic to
the $d$-ball for any $k=1,\dots,m$. Here, $m$ denotes the number of
 $d$-simplices of  $t$.)

\begin{prop} Any shellable triangulation of the ball $D^d$
has at least one ``ear''.
\end{prop}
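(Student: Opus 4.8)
The plan is to prove a sharper statement that implies the proposition with no induction: the \emph{last} simplex $s_m$ of any shelling $s_1,\dots,s_m$ is already an ear. Write $P_{m-1}=s_1\cup\dots\cup s_{m-1}$ and $R=s_m\cap P_{m-1}$. If $m=1$, then $s_1=D^d$ and all $d+1$ of its facets lie on $\partial D^d$, so $s_1$ is an ear; so assume $m\ge 2$. Since $P_m=D^d$ is connected we have $R\neq\varnothing$, and since the interiors of distinct top-dimensional simplices are disjoint, $R\subseteq\partial s_m\cap\partial P_{m-1}$. The goal is to count how many facets of $s_m$ meet $P_{m-1}$ and show that at least two do not.

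First I would pin down the combinatorial type of $R$. Both $P_{m-1}$ and $P_m=D^d$ are $d$-balls, and $s_m$ is a $d$-ball attached to $P_{m-1}$ along $R\subseteq\partial s_m$; by the standard PL description of a shelling step this forces $R$ to be a $(d-1)$-ball that is a \emph{proper} subcomplex of $\partial s_m$. A triangulated $(d-1)$-ball is pure, and a pure $(d-1)$-dimensional subcomplex of the boundary of a $d$-simplex is a union of entire facets; writing $b$ for their number we get $1\le b\le d$, the value $b=d+1$ being excluded because $\partial s_m$ is a sphere, not a ball. Hence $s_m$ has exactly $d+1-b\ge 1$ facets lying on $\partial D^d$, and it is an ear precisely when $b\le d-1$.

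The heart of the argument is to exclude the case $b=d$, and this is exactly where the hypothesis that every vertex lies on $\partial D^d$ is used. Suppose $b=d$, so $R$ is the union of all facets of $s_m$ but one, say $F$. These $d$ facets are precisely the facets containing the vertex $w_0$ opposite $F$, so $R=w_0\ast\partial F$ is a cone with apex $w_0$ lying in its relative interior, and $s_m=w_0\ast F$. Because $w_0\in\mathrm{relint}(R)\subseteq\partial P_{m-1}$, the complex $R$ agrees with $\partial P_{m-1}$ near $w_0$, giving $\mathrm{lk}(w_0,\partial P_{m-1})=\partial F$ and hence $\mathrm{lk}(w_0,P_{m-1})$ is a $(d-1)$-ball with boundary $\partial F$. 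I would then compute $\mathrm{lk}(w_0,D^d)=\mathrm{lk}(w_0,P_{m-1})\cup\mathrm{lk}(w_0,s_m)$ glued along $\mathrm{lk}(w_0,R)=\partial F$; since $\mathrm{lk}(w_0,s_m)=F$ is a $(d-1)$-ball with the same boundary $\partial F$, the union of two $(d-1)$-balls along their common boundary sphere is a $(d-1)$-sphere. Thus $w_0$ would be an interior vertex of $D^d$, contradicting the hypothesis; therefore $b\le d-1$, so $s_m$ has at least two facets on $\partial D^d$ and is an ear. The only nonroutine points are PL-topological, and I expect the main obstacle to be the first one: justifying, from the purely topological notion of shellability used here, that the attaching region $R$ is a $(d-1)$-ball and hence a union of whole facets; once that is in hand, the link computation converting $b=d$ into an interior vertex is mechanical.
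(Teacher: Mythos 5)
Your proof takes essentially the same route as the paper's: the paper's entire argument is the one-line observation that the last simplex $s_m$ of the shelling must be an ear, since otherwise $P_{m-1}$ would not be homeomorphic to a ball. Your write-up is a more careful version of that same argument, filling in what the paper leaves implicit --- in particular the facet count and the exclusion of the case $b=d$ via the standing hypothesis that all vertices lie on $\partial D^d$, which is precisely the hypothesis without which the statement fails (e.g., the triangulation of a polygon obtained by coning from one interior vertex is shellable but has no ears).
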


\begin{proof} Consider the simplex $s_m$. Then, by definition, $P_m =
P_{m-1}\cup s_m$, where $P_m$ and $P_{m-1}$ are homeomorphic to the
ball. This means that $s_m$ is an ``ear'' because otherwise
$P_{m-1}$ is not homeomorphic to the ball. (Only the operation of
``cutting off an ear'' allows us to determine shelling a
triangulation in the inverse order.)
\end{proof}

In particular, this proposition implies that if a triangulation has
no ``ears,'' then it is not shellable.




We complete this paper by a result showing that the problem on the number of 
``ears'' has a partial solution in a rather general geometric situation.

Let $S = \{p_i\}\subset {\Bbb R}^d$ be a set consisting of $n$ points
and let $t$ be a triangulation of $S$. The triangulation $t$ is {\it
regular\/} if there exists a strictly convex piecewise-linear
function, defined on the convex hull of $S$ and linear on each
simplex of $t$ (see~\cite{7, 16}).

We easily see that the Delaunay triangulation of the set $S$ is
regular. Indeed, For this purpose, we set the values $y_i =
\|p_i\|^2$ at vertices of the triangulation and continue the
function to the simplices by linearity. By the basic property of the
Delaunay triangulation \cite{12}, the function constructed is
convex.

It is known \cite{11} that any regular triangulation is
shellable. Then 
\begin{cor} Any regular triangulation of a generic convex simplicial polyhedron  
has at least one ``ear''.
\end{cor}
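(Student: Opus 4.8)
The plan is to derive this corollary directly from the chain of implications already assembled in this section, namely that a regular triangulation is shellable and that a shellable triangulation has an ``ear''; no new geometric argument is needed beyond checking that the hypotheses line up. First I would verify that the object to which the previous propositions apply is genuinely a triangulation of a ball $D^d$ with all vertices on the boundary. Let $S$ be the vertex set of the given generic convex simplicial polyhedron. Since the polyhedron is convex, $\conv(S)$ is a convex body homeomorphic to $D^d$, and every point of $S$, being an extreme point, lies on its boundary sphere $\partial(\conv S)\cong S^{d-1}$. Because the polyhedron is simplicial, this boundary is already a simplicial complex, and the regular triangulation $t$ of $S$ restricts on $\partial(\conv S)$ to precisely this boundary complex. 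Hence $t$ is exactly the kind of triangulation of $D^d$ (all vertices on the boundary) for which the notion of ``ear'' and the shellability proposition are stated.

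Second, I would invoke the cited structural fact: by \cite{11}, every regular triangulation is shellable. Applied to our $t$, this produces a shelling order $s_1,\dots,s_m$ of its $d$-simplices with each partial union $P_k=s_1\cup\cdots\cup s_k$ homeomorphic to $D^d$.

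Finally, I would apply the above proposition (that any shellable triangulation of $D^d$ has at least one ``ear'') directly to $t$, concluding that $t$ has at least one ``ear.'' The whole geometric content is already packaged there — the last simplex $s_m$ of a shelling must meet the boundary in at least two facets, for otherwise $P_{m-1}=P_m\setminus s_m$ would fail to be a ball — so the corollary follows with no independent computation.

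The only point requiring genuine care is the hypothesis-matching in the first step: confirming that a regular triangulation of the vertex set of a convex simplicial polyhedron really is a triangulation of a ball all of whose vertices lie on the boundary, so that the shellability proposition is applicable. The genericity and simpliciality assumptions are exactly what guarantee this and exclude degenerate boundary behaviour; once they are in force, the corollary is a formal consequence of \cite{11} together with the preceding proposition, presenting no further obstacle.
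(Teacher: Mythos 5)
Your proposal is correct and follows exactly the paper's own route: the paper derives the corollary by chaining the cited fact that regular triangulations are shellable with the preceding proposition that shellable triangulations of $D^d$ have an ``ear.'' Your additional hypothesis-checking (that a regular triangulation of the vertex set of a generic convex simplicial polyhedron is indeed a triangulation of a ball with all vertices on the boundary) is a careful touch the paper leaves implicit, but it does not change the argument.
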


The fact that regularity implies shellabilty also shows that the triangulation constructed by Rudin is not
regular. For other examples of irregular triangulations, see, e.g.,
\cite{7}.

\medskip

Actually, using the method which is considered in \cite{11, 21, AGMT} it can be proved that any regular triangulation $t$ of a convex simplicial polytope has at least two ``ears''. Moreover,  $t$ and its dual triangulation have together at least $d+1$ ears. A proof of this theorem will be considered in our further paper.


\section{Conclusion}


One of the main aims of the present paper was to show that the
progress in discrete versions of the four-vertex theorem is not less
substantial than in the smooth case. These theorems have a long
history, which started with the Cauchy lemma. Furthermore, in the discrete case, there are higher-dimensional
generalizations. Possibly, they will help to formulate and prove
similar theorems for the smooth case.

\end{document}